\newtheorem{theorem}{Theorem}[section]
\newtheorem{lemma}[theorem]{Lemma}
\newenvironment{definition}[1][Definition]{\begin{trivlist}
\item[\hskip \labelsep {\bfseries #1}]}{\end{trivlist}}
\newenvironment{remark}[1][Remark]{\begin{trivlist}
\item[\hskip \labelsep {\bfseries #1}]}{\end{trivlist}}
\newcommand{\R}{\mathbb{R}}
\newcommand{\Z}{\mathbb{Z}}
\newcommand{\eps}{\varepsilon}
\newcommand{\be}{\begin{equation}}
\newcommand{\ee}{\end{equation}}
\newcommand{\bea}{\begin{eqnarray}}
\newcommand{\eea}{\end{eqnarray}}
\newcommand{\ba}{\begin{array}}
\newcommand{\ea}{\end{array}}
\begin{document}
 
\title{Orbital stability of localized structures via B\"acklund transfomations}
\thanks{AMS Subject Classifications:  35Q51, 37K35, 37K45} 
\date{}
\maketitle     
 
\vspace{ -1\baselineskip}

{\small
\begin{center}
\renewcommand{\thefootnote}{\fnsymbol{footnote}}
 {\sc A. Hoffman\footnote{Franklin W. Olin College of Engineering; Needham, MA} and C.E. Wayne\footnote{Boston University Department of Mathematics and Statistics; Boston, MA} 
} 
\end{center}
}

\numberwithin{equation}{section}
\allowdisplaybreaks
\renewcommand{\thefootnote}{\arabic{footnote}}

 \begin{quote}
\footnotesize
{\bf Abstract.}  
The B\"acklund Transform, first developed in the context of differential geometry, has been classically used to obtain multi-soliton states in completely integrable infinite dimensional dynamical systems.  It has recently been used to study the stability of these special solutions.  We offer here a dynamical perspective on the B\"acklund Transform, prove an abstract orbital stability theorem, and demonstrate its utility by applying it to the sine-Gordon equation and the Toda lattice.
\end{quote}

\section{Introduction}








In this paper we survey some recent work on the use of B\"acklund transformations to study the stability of
localized structures in infinite dimensional Hamiltonian systems.  For finite dimensional Hamiltonian systems
the constraints imposed by the Hamiltonian structure mean that the stability of stationary solutions
can be reduced either to showing that all the eigenvalues of the linearized system at the fixed point
lie on the imaginary axis (for spectral stability) or that the full, nonlinear system exhibits Lyapunov stability.
The stability of periodic orbits can be studied by similar methods by reducing the problem to the consideration
of a fixed point of a Poincar\'e map.

For infinite dimensional systems the situation can be more subtle.  There, the possible presence of dispersive
phenomena means that one may have asymptotic stability of such systems, in appropriate norms, a 
phenomenon that is impossible in finite
dimensional systems.

Consequently, the study of stability in such systems has followed two rather different tracks.  On one hand,
methods to prove Lyapunov (or orbital) stability of localized solutions like traveling waves, solitons, or
multi-solitons have been developed which rely on regarding the solution as a minimizer, or critical
point, of some energy functions, often subject to appropriate constraints.  Examples of this
type of approach are \cite{benjamin:1972}, \cite{grillakis:1987}, \cite{bona:1987}, \cite{maddocks:1993}.

The second approach typically begins by analyzing the linearization of the system about the solitary wave.
The spectrum of the linearization is then considered and one shows that on the complement of the point
spectrum the linearized evolution generates a dispersive evolution.  If the dispersive
decay is sufficiently rapid, this can then (sometimes) be used in conjunction with Duhamel's formula to
derive nonlinear, asymptotic stability of the underlying solitary wave.  Examples of this approach
include \cite{pego:1994}, \cite{martel:2002}.  Closely related to this approach are stability or instability results based on invariant
manifold theorems \cite{krieger:2006}, \cite{pillet:1997}.  Here, one typically shows that the nonlinear equations possesses 
an invariant manifold associated with the family of localized solutions and examines the behavior
of solutions near this manifold to understand stability properties of the underlying family.

Recently, an old tool has been adapted to study both of these types of stability, namely B\"acklund transformations.
B\"acklund transformations define a relationship between two functions (often, through a differential equation,
or some more complicated equation) such that if one of the functions satisfies a given partial differential
equation, so does the second.  The partial differential equation satisfied by the second function may
be the same PDE satisfied by the first function (in which case one speaks of an auto-B\"acklund transform ) or it may be a different
PDE.  In the study of infinite-dimensional Hamiltonian systems, B\"acklund transforms have mostly been used in the context of
completely integrable infinite dimensional systems to obtain explicit formulas for soliton, multi-soliton, or other
special solutions of the equations.  Thus, for instance, the auto-B\"acklund transform for the Korteweg-de Vries (KdV) equation
relates the zero solution to the one-soliton solution, the one-soliton solution to the two-soliton solution and
so on and so forth.

However, the B\"acklund transformation is also turning out to be a useful tool to investigate the stability of such special solutions
as well.  It may not be clear at first glance why this is of interest.  Since in principle, one knows ``everything'' about
solutions of a completely integrable system, the stability or instability of such solutions might seem an obvious
by-product of their integrability.  In practice, however, it may be difficult to see from the formulas defining
the solutions in these integrable systems what the asymptotic behavior of solutions with initial conditions
close to a soliton are.  Furthermore, stability results based on B\"acklund transformations have yielded at least two new insights
not available from the complete integrability machinery:
\begin{itemize}
\item First, in some circumstances, they allow one to establish stability in much less regular spaces than can
be treated either with completely integrable structure, or with energy methods.  A first example of this approach is
the work of Merle and Vega, \cite{merle:2003}.  They used the Gardner transformation which maps solutions of the KdV
equation close to the soliton into solutions of the modified KdV equation near a kink solution.  (The Gardner
transformation is an example of a B\"acklund transformation which links two different equations.)  They then use the stability
of modified KdV kinks in the energy space, plus the fact that the Gardner transformation also maps $L^2$ solutions
in KdV into $H^1_{loc}$ solutions of modified KdV to conclude that KdV solitons are actually stable in $L^2$.
This approach has since been extended to conclude that multi-soliton solutions of KdV are also stable in
$L^2$, \cite{munoz:2011},  and also that the soliton solution of the nonlinear Schr\"odinger equation is stable in $L^2$, \cite{pelinovsky:2010}.
\item A second advantage of B\"acklund transformation methods is that they can sometimes be used as
 the starting point for a perturbative
argument which yields insight into the behavior of other non-integrable systems.  
Thus, the B\"acklund transformation-based study of the stability
of soliton solutions of the (integrable) Toda-lattice in \cite{mizumachi:2008} served as the basis for a simple proof of stability
of solitary waves in a general class of non-integrable Fermi-Pasta-Ulam models \cite{hoffman:2008}.
\end{itemize}

While this paper will focus on rigorous applications of the B\"acklund transformation method it is 
worth noting that similar ideas have been used
in non-rigorous settings (sometimes in advance of the rigorous applications) to compute explicit approximate solutions
with initial conditions close to solitary waves.  Thus, in \cite{mann:1997}, Mann used a linearized 
B\"acklund transformation to compute the Green's
function for the KdV equation linearized about the soliton solution and then in turn used this to study the evolution
of initial conditions close to the soliton.
Likewise, Tsigaridas, et al \cite{tsigaridas:2005} make a more general study of this same question and apply these ideas
to compute approximate solutions of both the nonlinear Schr\"odinger equation
and KdV equations with the aid of linearized B\"acklund transformations.

The classical view of the B\"acklund transform for the sine-Gordon equation is geometric in nature.  It relates angles between curves of zero curvature on patches of pseudo-spherical surfaces.  As is common in differential geometry, partial differential equations arise.  Here the partial differential equation relates the aforementioned angle as a state variable to the coordinates on the manifold as independent variables.  Thus the geometric relationship between these angles on a pair of psuedo-spherical surfaces manifests in the PDE world as a relationship between a pair of solutions.  

The zero solution is related in this way to a family of monotone front solutions $u(t,x) =u_c(x-ct-\delta)$ which connect $0$ and $2\pi$.  In one physical model, the state variable $u$ in the sine-Gordon equation corresponds to the angle by which an elastic ribbon is twisted from vertical at position $x$ and time $t$.  The front solution obtained from the zero solution via Backlund transform (a maneuver which naively appears to have everything to do with the geometry of pseudo-spherical surfaces and nothing to do with the twisting of elastic ribbons) thus corresponds to an elastic ribbon that has a full twist, or kink, and is commonly called a kink solution.  Applying the Backlund transformation to the kink solution can now produce a solution which is asymptotic to $0$ and $4\pi$ at spatial $\pm \infty$, i.e. has two kinks.  This so-called two-kink solution resolves as $t \to \pm \infty$ to the linear combination of two well-separated kink solutions, each traveling with its own characteristic speed.  Moreover, the characteristic speeds of the kinks are identical at temporal $\pm \infty$ but the phases are allowed to vary.  Thus we can regard the two-kink solution as capturing an interaction in which a fast steep kink overtakes a slow shallow kink with only a phase shift (as opposed to excitation of dispersive modes) to show for the nonlinearity.  Repeated application of the Backlund transform can produce multi-kink solutions which resolve into linear combinations of multiple kink solutions much as multi-soliton solutions resolve into linear combinations of solitons.  

It is well-known to experts that the perspective of the Backlund transformation is a very useful for constructing multi-soliton solutions.  In studying the stability of these kink and multi-kink solutions, however, the theory of dynamical systems is necessarily brought in and from this perspective the classical view of the B\"acklund transform is not entirely natural.  The reason for this is as follows.  The typical strategy of proof in the nascent literature of stability via B\"acklund transform is effectively to conjugate the flow about a soliton or multi-soliton (or the linearization thereabout) with the flow about the zero solution, leveraging the stability of the zero solution to obtain the stability of the soliton or multi-soliton.  The problem from the perspective of dynamical systems is that when conjugating a flow one makes use of a map that acts on the phase space and not on the much larger space of trajectories in the phase space.  One of the key ideas in this paper is to redefine the B\"acklund transform as a map that acts on the phase space.  An orbital stability result for solitons and multi-solitons then follows very quickly from this definition with the aid of well-developed and classical ideas in dynamical systems.

\section{Abstract orbital stability}
\begin{definition}
Let $X$ and $Y$ be open subsets of affine subspaces of Banach spaces and let $\Phi(t) : X \to X$ and $\Psi(t) : Y \to Y$ be semiflows.  
Let $\Lambda$ be a finite dimensional manifold and let $Z$ be a Banach space.
Let $F : X \times Y \times \Lambda \to Z$ be a $C^2$ function such that for each $\lambda \in \Lambda$, $\mathcal{M}_\lambda := F(\cdot,\cdot,\lambda)^{-1}(0)$ is an invariant set for the product flow $\Phi \times \Psi : X \times Y \to X \times Y$.
Assume further 
\begin{itemize}
\item[({\bf H0})] there is some $(\bar{x},\bar{y},\bar{\lambda}) \in X \times Y \times \Lambda$ such that $F(\bar{x},\bar{y},\bar{\lambda}) = 0$.
\item[({\bf H1})] $D_xF(x,y,\lambda) : T_x X \to T_{F(x,y,\lambda)}Z$ is Fredholm and injective whenever $F(x,y,\lambda) = 0$.  
\item[({\bf H2})] $D_{y,\lambda}F(x,y,\lambda) : T_yY \to T_{F(x,y,\lambda)} Z$ is an isomorphism whenever $F(x,y,\lambda) = 0$.
\end{itemize}
Then we say that $F$ {\bf B\"{a}cklund-conjugates} the flows $\Phi$ and $\Psi$.  In the case that $\Phi = \Psi$ we say that $F$ {\bf auto-B\"{a}cklund-conjugates} $\Phi$ with itself.
\end{definition}

\begin{theorem} \label{th:main}
Assume (H0)-(H2).  Let $(\bar{x},\bar{y},\bar{\lambda})$ be given as in (H0) and let $H \subset Y$ be an invariant manifold for $\Psi$ that contains $\bar{y}$ and is stable in sense of Lyapunov:  There is an $\eps_0 > 0$ such that for each $\eps \in (0,\eps_0]$ there is a $\delta = \delta(\eps) > 0$ such that for any $t >0$, we have $d_Y(\Psi(t) y,H) \le \eps$ whenever $d_Y(y,H) \le \delta$.  Assume further that 
\begin{enumerate} 
\item[C1] Given any compact subset $\Lambda_0 \subset \Lambda$ there is a constant $C$ such that for each $(x,y,\lambda) \in F^{-1}(0)$, there is a subspace $X_1$ complementary to the kernel of $D_x F(x,y,\lambda)$ such that 
\[ \left\|\left(\left.D_xF(x,y,\lambda)\right|_{X_1}\right)^{-1}\right\| \le C \]
with the estimate uniform among $y \in Y$ with $d_Y(y,H) < \delta(\eps_0) := \delta_0$, $x \in X$ and $\lambda \in \Lambda_0$ with $F(x,y,\lambda) = 0$.
Furthermore the estimate 
\[ \|D_x F(x_0,y_0,\lambda_0) - D_x F(x_1,y_1,\lambda_1)\| \le C(d_X(x_0,x_1) + d_Y(y_0,y_1) + d_\Lambda(\lambda_1,\lambda_0)) \]
holds uniformly among $x_0, x_1, y_0, y_1, \lambda_0, \lambda_1$ with $d_Y(y_j,H) < \delta_0$ and $(x_j,y_j,\lambda_j) \in F^{-1}(0)$.

\item[C2] the norm of $(D_{y,\lambda}F(x,y,\lambda))^{-1}$ is bounded above uniformly among $(x,y,\lambda)$ such that $d_Y(y,H) < \delta(\eps_0)$ and $\lambda \subset \Lambda_0$ compact and $F(x,y,\lambda) = 0$.  Furthermore, $D_{y,\lambda}F(x,y,\lambda)$ is uniformly Lipschitz among $(x,y,\lambda) \in F^{-1}(0)$ such that $d_Y(y,H) < \delta_0$

\item[C3] the norm of $D_x F(x,y,\lambda)$ is bounded above uniformly among $(x,y,\lambda)$ with $d_Y(y,H) < \delta(\eps_0)$, with $\lambda \subset \Lambda_0$ compact and $F(x,y,\lambda) = 0$.
\end{enumerate}
 
Then there is an invariant manifold $M$ for $\Phi$ containing $\bar{x}$, a function $\lambda^* : M \to \Lambda$, a decomposition of $M$ into invariant manifolds $M^\lambda = (\lambda^*)^{-1}(\lambda)$, as well as a constant $C$ such that 

\[  d_X(\Phi(t)x,M^{\lambda^*(x)}) \le C\eps \]
whenever $d_X(x,M) \le \frac{1}{C}\delta$.  Moreover, $M^\lambda$ is precisely the set of $x \in X$ such that $F(x,y,\lambda) = 0$ for some $y \in H$.
\end{theorem}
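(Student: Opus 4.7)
The plan is to leverage H2 and C2 through the Implicit Function Theorem (IFT) to express $F^{-1}(0)$ locally as the graph of a Lipschitz map $x \mapsto g(x) := (y(x), \lambda(x))$, and then to lift the Lyapunov stability of $H$ in $Y$ to orbital stability in $X$ via this graph. Concretely, the IFT applied at each point of $F^{-1}(0)$ in the strip $\{d_Y(y,H) < \delta_0\}$ yields functions $y(\cdot)$, $\lambda(\cdot)$ with uniform Lipschitz bounds inherited from C2, and $\bar x$ belongs to this graph with $y(\bar x) = \bar y$ and $\lambda(\bar x) = \bar\lambda$ by H0. One is then led to the definitions
\[
M^\lambda := \{x \in X : y(x) \in H,\ \lambda(x) = \lambda\}, \qquad M := \bigcup_{\lambda \in \Lambda} M^\lambda, \qquad \lambda^*(x) := \lambda(x),
\]
extending $\lambda^*$ to the $\delta_0/C$-neighborhood of $M$ via the same formula.

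Invariance of $M^\lambda$ under $\Phi$ follows by combining three facts: invariance of $\mathcal{M}_\lambda$ under $\Phi \times \Psi$ gives $F(\Phi(t)x, \Psi(t) y(x), \lambda) = 0$; the $\Psi$-invariance of $H$ places $\Psi(t) y(x)$ in $H$ whenever $y(x) \in H$; and uniqueness in the IFT then forces $y(\Phi(t) x) = \Psi(t) y(x)$ and $\lambda(\Phi(t) x) = \lambda$. In particular $\lambda^*$ is conserved along $\Phi$, and the two descriptions of $M^\lambda$ in the statement are readily seen to coincide.

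For the orbital stability estimate, fix $x$ with $d_X(x, M) \le \delta/C$. The Lipschitz bound in C2 gives $d_Y(y(x), H) \le \delta$, so by Lyapunov stability of $H$, $d_Y(\Psi(t) y(x), H) \le \eps$ for all $t > 0$; by the invariance argument this reads $d_Y(y(\Phi(t) x), H) \le \eps$ while $\lambda(\Phi(t) x) = \lambda^*(x)$ remains conserved. Pick $\tilde y_t \in H$ with $d_Y(y(\Phi(t) x), \tilde y_t) \le 2\eps$. It then suffices to produce $z_t \in X$ near $\Phi(t) x$ with $g(z_t) = (\tilde y_t, \lambda^*(x))$, for then $z_t \in M^{\lambda^*(x)}$ and the orbital bound follows from $d_X(z_t, \Phi(t) x) \le C\eps$.

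The construction of $z_t$ is the main obstacle, since $D_x F$ is Fredholm injective (H1) but in general not surjective, and one cannot directly apply the IFT to the equation $F(\cdot, \tilde y_t, \lambda^*(x)) = 0$. The workaround uses C1 and C3: C1 provides a uniform bound on the inverse of $D_x F$ on its closed range together with a Lipschitz estimate on $D_x F$ itself, and these feed a Newton--Kantorovich-type iteration that, started from $\Phi(t) x$ with initial residual $F(\Phi(t) x, \tilde y_t, \lambda^*(x)) = O(\eps)$ (controlled via C3 and C2), converges to a point $z_t$ with $F(z_t, \tilde y_t, \lambda^*(x)) = 0$ and $d_X(z_t, \Phi(t) x) \le C\eps$. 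The finite-dimensional obstruction in $\coker(D_x F)$ must be absorbed by exploiting the freedom in the choice of $\tilde y_t \in H$---equivalently, by the transversality between the image of the graph $g$ and the slice $H \times \{\lambda^*(x)\}$ that is implicit in the uniform applicability of H2 in the strip near $H$.
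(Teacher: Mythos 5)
Your construction of $M$, $M^\lambda$, and $\lambda^*$ as the preimage of $H$ under the implicitly defined graph $x\mapsto(y(x),\lambda(x))$, the invariance argument via uniqueness in the implicit function theorem, and the transfer of $d_X(x,M)\le\delta/C$ into $d_Y(y(x),H)\le\delta$ via the Lipschitz bound on $y(\cdot)$ all coincide with the paper's proof (one small point: the Lipschitz constant of $y(\cdot)$ requires (C3) as well as (C2), since $Dy=-(D_{y,\lambda}F)^{-1}D_xF$). The divergence, and the gap, is in the final step, where you must exhibit a point of $M^{\lambda^*(x)}$ within $C\eps$ of $\Phi(t)x$.

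You propose to solve $F(\cdot,\tilde y_t,\lambda^*(x))=0$ by a Newton--Kantorovich iteration started at $\Phi(t)x$, and you correctly flag that this collides with $\mathrm{coker}(D_xF)$, which (H1) does not rule out. But the proposed repair --- absorbing the finite-dimensional obstruction ``by exploiting the freedom in the choice of $\tilde y_t\in H$'' --- cannot work as stated: $H$ is a fixed finite-dimensional invariant manifold, and in the paper's first application it is the single point $\{(0,0)\}$, so there is no freedom in $\tilde y_t$ at all; nor do (H0)--(H2), (C1)--(C3) give any transversality between displacements of $\tilde y$ inside $H$ and the cokernel. The paper sidesteps the issue by never solving $F(\cdot,\tilde y,\lambda)=0$ from scratch. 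It applies the implicit function theorem in the $x$-variable once, at points of $F^{-1}(0)$, to obtain a local solution map $x_1^*(y,\lambda,k)$ with $F(x_1^*(y,\lambda,k)+k,y,\lambda)=0$, uniformly Lipschitz in $y$ by (C1). Invariance of $\mathcal{M}_{\lambda^*(x)}$ gives $F(\Phi(t)x,\Psi(t)y^*(x),\lambda^*(x))=0$ exactly, so $\Phi(t)x$ already lies on this graph, and comparing $x_1^*(\Psi(t)y^*(x),\lambda^*(x),k)+k=\Phi(t)x$ with $x_1^*(\tilde y_t,\lambda^*(x),k)+k\in M^{\lambda^*(x)}$ yields the estimate with constant $\mathrm{Lip}(x_1^*)$. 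Thus the equation is only ever solved for $x$ in a neighborhood of a known zero, with the uniform constants of (C1). To be fair, that IFT step tacitly requires $D_xF$ restricted to a complement of its kernel to be an isomorphism onto $Z$ --- this is what (C1)'s uniformly bounded inverse means and what the paper's examples actually verify --- and under that same effective hypothesis your Newton iteration also closes (the initial residual is $O(\eps)$ by (C2)/(C3) since $F$ vanishes at $(\Phi(t)x,\Psi(t)y^*(x),\lambda^*(x))$). So the fix is to make that surjectivity explicit and delete the absorption-via-$\tilde y_t$ step, not to patch it.
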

\begin{proof}
Let $(\bar{x},\bar{y},\bar{\lambda})$ be given as in (H0).  It follows from 
(H2) 
and the implicit function theorem that there are smooth functions $y^*$ and $\lambda^*$ mapping a neighborhood of $\bar{x}$ to neighborhoods of $\bar{y}$ and $\bar{\lambda}$ respectively such that $F(x,y^*(x),\lambda^*(x)) = 0$.  Furthermore, these functions are unique in that if $(x,y,\lambda)$ is close to $(\bar{x},\bar{y},\bar{\lambda})$ and $F(x,y,\lambda) = 0$, then $y= y^*(x)$ and $\lambda = \lambda^*(x)$.  

We claim that in addition, the functions $y^*$ and $\lambda^*$ can be extended to some maximal domain such that on this domain the range of $y^*$ contains a $\delta$-neighborhood of $H$.  To establish this claim, first note that (H2) allows us to enlarge the domain on which $y^*$ and $\lambda^*$ is defined by applying the implicit function theorem with base point $(x,y^*(x),\lambda^*(x))$ for any $x$ in the domain of $y^*$ and $\lambda^*$.  Furthermore, one can observe from the proof of the implicit function theorem that diameter of the neighborhoods on which the implicit functions $y^*$ and $\lambda^*$ are defined can be taken to be $2 \|D_x F(x,y,\lambda)^{-1}\| \mathrm{Lip} D_x F$.  In light of condition (C1), this diameter is uniform among $(x,y,\lambda) \in F^{-1}(0)$ for which $d_Y(y,H) < \delta_0$.  This establishes the claim: by repeatedly applying the implicit function theorem we can enlarge the domain of $y^*$ and $\lambda^*$ sufficiently so that the range of $y$ covers a neighborhood of $H$.

Note that \[ \left( \ba{c} D_x y^*(x) \\ D_x \lambda^*(x) \ea \right) = -D_{y,\lambda} F(x,y^*(x),\lambda^*(x))^{-1} F_x(x,y^*(x),\lambda^*(x)), \]
and hence the implicitly defined functions $y^*$ and $\lambda^*$ are Lipschitz on any set for which $x \mapsto \|D_{y,\lambda}F(x,y^*(x),\lambda^*(x))^{-1}\|$ and $x \mapsto \|F_x(x,y^*(x),\lambda^*(x))\|$ enjoy a uniform bound.
Define $M = (y^*)^{-1}(H)$ and with the notation $H_\delta = \{y \; | \; d_Y(y,H) < \delta\}$ define for $\delta$ sufficiently small $M_\delta = (y^*)^{-1}(H_\delta)$.  It follows from conditions (C2) and (C3) that $y^*$ and $\lambda^*$ are Lipschitz on $M_\delta \subset X$ for $\delta \le \delta(\eps_0)$.

Let $T_{\bar{x}}X = \bar{K} \times \bar{X_1}$ be a Lyapunov-Schmidt decomposition of $T_{\bar{x}}X$ subordinate to $D_x F(\bar{x},\bar{y},\bar{\lambda})$.  Here $\bar{K}$ denotes the kernel and $\bar{X}_1$ is chosen as in (C1).  It follows from (H1) that there is a smooth implicitly defined function $\overline{x_1^*}$ taking a neighborhood of $(\bar{y},\bar{\lambda},0) \in Y\times \Lambda \times K$ to $\bar{X_1}$ such that $F(\overline{x_1^*}(y,\lambda,k)+k,y,\lambda) = 0$ for any $k$ in the given neighborhood of $\{0\} \subset K$.  Since the range of $y^*$ contains $H_\delta$, it follows that $y = y^*(\overline{x_1^*}(y,\lambda,k)+k)$ and more specifically that for $y \in H$ we have $\overline{x_1^*}(y,\lambda,k)+k \in M$ whenever this quantity is defined.

Because of (H1) the point $(\bar{x},\bar{y},\bar{\lambda})$ is not distinguished among points in $F^{-1}(0)$.  Thus given any $(x,y,\lambda) \in F^{-1}(0)$ there is a similar Lyapunov-Schmidt decomposition $T_{x}X = K \times X_1$ and a similar implicitly defined function $x_1^*$.  It follows from (C1) that $x_1^*$ has a Lipschitz constant which is uniform in the choice of base point $(x,y,\lambda)$ for the implicit function theorem.

Let $C$ denote the Lipschitz constant of $y^*$.  Since we have assumed in the statement of the theorem that $x$ is $\delta / C$-close to $M$ it follows that $y^*(x)$ is $\delta$-close to $H$.  Recall that this is the neighborhood of Lyapunov stability for $H$ corresponding to the given small number $\eps$: $d_Y(\Psi(t)y^*(x),H) < \eps$.  Since the pair $(x,y^*(x))$ lies on the invariant manifold $\mathcal{M}_{\lambda^*(x)} = F(\cdot,\cdot,\lambda^*(x))^{-1}(0)$ it follows that 
$$F(\Phi(t)x,\Psi(t)y^*(x),\lambda^*(x)) = 0\ ,$$
 hence that $\Phi(t)x = x_1^*(\Psi(t)y^*(x),\lambda^*(x),k)+k$ for one of the local functions $x_1^*$.  
 
We now establish the Lyapunov stability of $M$:
\[ \ba{lcl} d_X(\Phi(t)x,M) & \le & d_X(\Phi(t)x,x_1^*(y^*(x),\lambda^*(x),k)+k) \\ \\
& = & d_X(x_1^*(\Psi(t)y^*(x),\lambda^*(x),k)+k,x_1^*(y^*(x),\lambda^*(x),k)+k) \\ \\
& \le & \mathrm{Lip}x_1^* d_Y(\Psi(t)y^*(x),y^*(x)) \\ \\
& \le & \eps \mathrm{Lip}x_1^*
\ea
\]
In the first line we have used our characterization of $M$.  In the second line we have used that $F^{-1}(0)$ is invariant for the product semiflow.  In the third line we have used that $x_1^*$ is Lipschitz and in the fourth line that $H$ is Lyapunov-stable for the semiflow $\Psi$.
\end{proof}

\section{Examples of orbital stability}
\subsection{Sine-Gordon equation}

As a first application of Theorem \ref{th:main}, we consider the orbital stability of the kink solutions of the Sine-Gordon
equation.  While the stability of these solutions is not surprising and could probably be proved using the energy
methods discussed above, it gives a simple illustration of our approach.  Furthermore, as we indicate at the end of this
section,  we suspect that with some additional work this method will also yield the stability of multi-kink
solutions for this equation.

The classical B\"acklund transform for the Sine-Gordon equation relates two solutions $\bar{u}$ and $\bar{u}'$ by the pair
of equations
\begin{eqnarray} \label{eq:SG-BT}
\bar{u}_x-\bar{u}_t &=& \bar{u}_x'-\bar{u}_t' + 2a \sin(\frac{\bar{u}+\bar{u}'}{2}) \\ \nonumber
\bar{u}_x + \bar{u}_t &=& - \bar{u}_x' - \bar{u}_t' + \frac{2}{a} \sin(\frac{\bar{u}-\bar{u}'}{2})
\end{eqnarray}

If we introduce phase space variables $u=\bar{u}$, $v= \bar{u}_t$ and $u'=\bar{u}'$, $v'= \bar{u}_t'$, we see that
the B\"acklund transform can be written as
\begin{eqnarray}\label{eq:Fdef}
F(u,v,u',v',a) = \left( \ba{c} u_x +v' - a\sin(\frac{u+u'}{2}) - \frac{1}{a}\sin(\frac{u-u'}{2}) \\  v + u_x' - \frac{1}{a}\sin(\frac{u-u'}{2}) + a\sin(\frac{u+u'}{2}) \ea \right) = 0 
\end{eqnarray}
and it is to this function that we apply Theorem \ref{th:main}

Recall that the B\"acklund transform for the Sine-Gordon equation maps the zero solution to the $1$-kink solution
and then successively maps the $k$-kink to the $k+1$-kink, for any positive integer $k$ \cite{lamb:1971}.  With this in mind, 
let $\bar{X} = \{(u,v) \; | \; \sin(\frac{u}{2}) \in H^1 \mbox{ and } v \in L^2 \mbox{ and } u(-\infty) = 0\}$.  Consider the decomposition $\bar{X} = \cup_{k = -\infty}^\infty X_k$ where $X_k = \{(u,v) \in \bar{X} \; | \; u(\infty) = 2\pi k\}$.  \footnote{Note that the fact that $\sin(u/2) \in H^1$ implies that the jump in $u$ from $-\infty$ to $\infty$ is an integer multiple of $2\pi$.}  Let $\Phi(t)=\Psi(t)$ denote the time $t$ map for the sine-Gordon equation $u_{tt} = u_{xx} - \sin u$ and let $Z = L^2 \times L^2$.  Given the properties of the $k$-kink
solution, it is natural to study its evolution in the space $X_k$.  For the time being, since we want to concentrate on
the $1$-kink solution we will focus on the spaces $X_0$ and $X_1$, and consider the function:
\begin{equation}
F: X_{1} \times X_0 \times (0,1) \to Z
\end{equation}
We now have:
\begin{theorem}\label{thm:one-kink} 
Let $\bar{u}$ be a $1$-kink solution for the sine-Gordon equation, let $\eps > 0$ be given and let $u^0$ be 
 $\eps$-close to $\bar{u}(t)$ in $H^1 \times L^2$  for time $t=0$.  (i.e. $(u_0 - \bar{u}(0))$ is small
 in $H^1 \times L^2$).  Let $u^t$ denote the time-evolution of $u^0$.
 Then for all time,  $u^t$ remains $\sqrt{\eps}$-close to some $1$-kink solution with to the speed of $\bar{u}$ 
 \end{theorem}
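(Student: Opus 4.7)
The plan is to apply Theorem~\ref{th:main} to the function $F$ defined in (\ref{eq:Fdef}), taking $X = X_1$, $Y = X_0$, $\Lambda = (0,1)$, $Z = L^2 \times L^2$, and $\Phi = \Psi$ the sine-Gordon time-$t$ map. At the base point we set $\bar{y} = (0,0)$, $\bar{x} = (\bar{u}, \bar{u}_t)|_{t=0}$, and $\bar{\lambda} = \bar{a}$, the classical B\"acklund parameter for which the transform of the zero solution yields $\bar{u}$; the Lyapunov-stable invariant submanifold is taken to be the singleton $H = \{\bar{y}\} \subset X_0$.

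Two preliminary ingredients are routine. Lyapunov stability of $H=\{0\}$ in $X_0$ follows from conservation of the sine-Gordon energy $E(u,v) = \int \bigl(\tfrac{1}{2}v^2 + \tfrac{1}{2}u_x^2 + (1-\cos u)\bigr)\,dx$, which is coercive in $H^1\times L^2$ near zero (using $1-\cos u \gtrsim u^2$ for $u$ bounded in $L^\infty$) together with a standard continuity argument to control $\|u(t)\|_{L^\infty}$. Hypothesis (H0) is the textbook identity that the B\"acklund transform with parameter $\bar{a}$ sends $(0,0)$ to $\bar{u}$.

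The heart of the work is verifying (H1)-(H2) and (C1)-(C3). Differentiating (\ref{eq:Fdef}), both $D_x F$ and $D_{y,\lambda} F$ are $2\times 2$ first-order differential operators on $\mathbb{R}$ whose entries involve pointwise multiplication by bounded functions built from $\cos\bigl(\tfrac{u\pm u'}{2}\bigr)$. Because $u,u'$ tend to constants as $x\to\pm\infty$, these coefficients admit exponential limits at infinity, so the classical Fredholm theory for ODE operators with exponential dichotomies yields the Fredholm property required in (H1), with kernel at the base point spanned by the spatial-translation mode of $\bar{u}$. Hypothesis (H2) follows by observing that varying $\lambda=a$ adjusts the kink profile in a direction transverse to the range of $D_y F$, upgrading $D_{y,\lambda}F$ to an isomorphism. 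The uniform estimates (C1)-(C3) reduce to uniformity of these dichotomies as $(x,y,\lambda)$ varies over $F^{-1}(0)$ with $d_Y(y,H)<\delta_0$ and $\lambda$ in a compact subset of $(0,1)$, which follows from the explicit form of the asymptotic coefficient matrices.

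With the hypotheses in hand, Theorem~\ref{th:main} produces an invariant manifold $M\subset X_1$ with leaves $M^a = \{(u,v)\in X_1 : F(u,v,0,0,a)=0\}$. A direct integration of the reduced B\"acklund equations identifies $M^{\bar{a}}$ as the one-parameter family of spatial translates of $\bar{u}$, which coincides with its sine-Gordon orbit by Lorentz invariance of the kink profile. The abstract conclusion $d_X(\Phi(t)x, M^{\bar{a}}) \le C\eps$ then translates into the $H^1\times L^2$ statement of the theorem, with the $\sqrt{\eps}$ factor reflecting the composed Lipschitz constants (of $y^*$, $x_1^*$, and the inverse energy coercivity used at the Lyapunov step) appearing in the norm conversions. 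The principal technical obstacle is the uniform Fredholm analysis of the ODE operator $D_xF$ required by (H1) and (C1); once that is in place, the remaining verifications and the concluding norm conversion are routine.
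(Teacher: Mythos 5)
Your proposal follows essentially the same route as the paper: apply Theorem~\ref{th:main} to the function $F$ of \eqref{eq:Fdef} with $H=\{(0,0)\}\subset X_0$, reduce $D_{(u,v)}F$ and $D_{(u',v',a)}F$ to first-order scalar ODE operators with asymptotically constant coefficients, and invoke exponential-dichotomy/Fredholm theory (the content of Lemma~\ref{lem1}) for (H1)--(H2) and the uniform bounds (C1)--(C3). The only place where you assert what the paper actually has to compute is (H2): the claim that varying $a$ is transverse to the range of $D_{y}F$ amounts to the non-vanishing of $\int_{\R} b(x)\mu(x)\,dx$ in \eqref{eq:dyF}, which the paper can only verify explicitly at $u'=0$, $u$ a $1$-kink (where $b$ and $\mu$ are both positive) and then extends by continuity --- so that step deserves the explicit sign argument rather than an appeal to transversality.
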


\begin{proof}  We first  check that hypotheses $(H0)$, $(H1)$, and $(H2)$ are satisfied.
We take $\bar{y}=(u',v')=(0,0)$, and then we can solve explicitly for those points $\bar{x}=(u,v)$ for which $F$ is zero and we
find $u= 4 \arctan(\exp( a x + \delta))$, $v= 4  a v \exp( a x + \delta)/(1+\exp(2(a x + \delta))$, where
$v$ and $a$ are related by $a \sqrt{1-v^2} = 1$.  (Note that this calculation insures that $(H0)$ is satisfied.)
However, we will use this explicit form of the kink solution very little, in order
 to set the stage for our discussion of the stability of the $k$-kink solution later in this section.

Now differentiate $F$ with respect to $(u,v)$ to obtain:
\begin{equation}
D_{(u,v)}F(u,v,u',v',,a) = \left( \ba{cc} \partial_x - \frac{a}{2}\cos(\frac{u+u'}{2}) - \frac{1}{2a}\cos(\frac{u-u'}{2}) & 0 \\ \\ 
\partial_x -\frac{1}{2a}\cos(\frac{u+u'}{2}) + \frac{a}{2}\cos(\frac{u-u'}{2}) & 1 \ea \right) 
\end{equation}
To invert this operator we need to solve the system of ODE's:
\begin{equation}
\left( \ba{cc} \partial_x - \frac{a}{2}\cos(\frac{u+u'}{2}) - \frac{1}{2a}\cos(\frac{u-u'}{2}) & 0 \\ \\ 
\partial_x -\frac{1}{2a}\cos(\frac{u+u'}{2}) + \frac{a}{2}\cos(\frac{u-u'}{2})   & 1 \ea \right) \left( \ba{c} \phi \\ \psi \ea \right) = 
\left( \ba{c} f \\ g\ea \right)
\end{equation}
Note that the operator is lower triangular, and  the second row gives $\psi $ in terms $\phi_x$,   a bounded
 (invertible) multiplication operator acting on $\phi $, and  $g$. 

Thus, we focus on the first row 
which gives $\phi $ as the solution to a first order non-autonomous ODE (in $x$) with inhomogeneous term given by 
$f$.    More precisely, we must solve
\begin{equation}\label{eq:dxF}
\partial_x \phi - \left( \frac{a}{2}\cos(\frac{u+u'}{2}) + \frac{1}{2a}\cos(\frac{u-u'}{2}) \right)  \phi = f\ .
\end{equation}

We analyze this equation, and similar equations below with the aid of the following lemma.
\begin{lemma} \label{lem1}
Consider the ODE 
\begin{equation}\label{eq:inhom}
u_x - \alpha(x)u  =  f(x)\ .
\end{equation}
\begin{enumerate}
\item Assume that $\alpha_{\pm} = \lim_{x\to \pm \infty} \alpha(x) $ 
are defined with $\alpha_{-}> 0 > \alpha_{+} $.  Assume further that
$ \int_0^{\infty} |\alpha(t) - \alpha_+ |dt  < \infty$, and $  \int_{-\infty}^{0} |\alpha(t) - \alpha_- | dt  < \infty$
 Then there exists a constant $C_{\alpha}$, such that the unique solution of  \eqref{eq:inhom}
with  $u(x_0) = 0$ satisfies $\|u\|_{H^1} \le C_{\alpha}  \|f\|_{L^2}$.
\item Assume that  $\alpha_{-}  < 0 < \alpha_{+} $, 
with $|\alpha(x)| \le \kappa < \infty$ and that $\int_\R f(t) \phi(t)dt = 0$ for the unique (up to contant multiple),
non-zero,  bounded  
$\phi$ solving  the adjoint ODE $\dot{\phi} = -\alpha(t)\phi$.  Then there is a unique choice of $u_0$ for which 
$u \in L^2$ and for this choice of $u_0$ we have $\|u\|_{H^1} \le C_{\alpha}  \|f\|_{L^2}$.  
\end{enumerate}
\end{lemma}

\begin{remark}  This lemma is a simple and explicit example of the relationship between
Fredholm properties of operators and exponential dichotomies, which has been very useful
in the theory of dynamical systems \cite{palmer:1984}.  In particular, the fact that we require $\alpha(x)$ to
converge to its limiting values in $L^1$ is a very natural assumption in this context.
\end{remark}

\begin{proof} (of Lemma) Define $\mu(x) = \exp(- \int_{x_0}^x \alpha(t) dt )$.
Then the unique solution of \eqref{eq:inhom} with $u(x_0) = u_0$ is
\begin{equation}\label{eq:inhom_soln}
u(x) = u_0/\mu(x) + \int_{x_0}^x \frac{ \mu(y) }{\mu(x) } f(y) dy\ .
\end{equation}
Write $u = u^> + u^<$, where $u^>(x) = u(x)$ for $x \ge x_0$, and zero otherwise, and 
$u^<(x) =  u(x)$ for $x < x_0$, and zero otherwise.  Then $\| u \|_{L^2}^2 = \| u_> \|_{L^2}^2 + \| u^< \|_{L^2}^2$.
We will bound $\| u_> \|_{L^2}^2$ and leave the estimate on $\| u^< \|_{L^2}^2$ as an exercise.

We now consider specifically the situation in Case 1, where  $\alpha_{-}> 0 > \alpha_{+} $.  Define
$h^{>}(x) = |h(x)|$ if $x> x_0$ and $h^{>}(x) = 0$ for $x \le x_0$.  Likewise define
${\mathcal E}^{>}(x) = \exp(\alpha_{+} x)$ if $x>0$ and zero otherwise.  Then we can estimate
\begin{eqnarray}\nonumber
|u^{>}(x)| &=&  |\int_{x_0}^x e^{\int_y^x \alpha(t)dt} h(y) dy | \\ \nonumber
&=&   |\int_{x_0}^x e^{\alpha_{+}(x-y) + \int_y^x (\alpha(t)-\alpha_{+}) dt} h(y) dy | \\ \nonumber
& \le & C^+_{\alpha} \int_{x_0}^x e^{\alpha_{+} (x-y)} |h(y)| dy = C^+_{\alpha} \int_{-\infty}^{\infty} {\mathcal E}_+(x-y) h^{>}(y) dy
\end{eqnarray}
From this last expression we immediately obtain $\| u^{>} \|_{L^2} \le \tilde{C}^+_{\alpha} \| {\mathcal E}_+ \|_{L^1} \| h^{>} \|_{L^2}$,
from Young's inequality.  The $L^2$ norm of the derivative of $u^{>}$ can be estimated in a similar fashion which
completes the estimate of the $H^1$ norm, and the proof of Case 1.

Now turn to Case 2.  Rewrite \eqref{eq:inhom_soln} as 
\begin{equation}
\mu(x) u(x) = u_0 + \int_{x_0}^x \mu(y) f(y) dy
\end{equation}
The assumptions on $\alpha$ imply that $\mu(x) \to 0$ as $|x| \to \infty$ and thus, in order for the solution $u(x)$
to be bounded we must have
\begin{equation}
u_0 + \int_{x_0}^{\infty} \mu(y) f(y) dy = u_0 + \int_{x_0}^{-\infty} \mu(y) f(y) dy =0\ ,
\end{equation}
which uniquely defines $u_0$ provided
\begin{equation}
0 = \int_{x_0}^{\infty} \mu(y) f(y) dy - \int_{x_0}^{-\infty} \mu(y) f(y) dy = \int_{-\infty}^{\infty} \mu(y) f(y) dy
\end{equation}
Note that $\mu(x)$ is the solution of the adjoint ODE, and hence the hypothesis of Case 2 is satisfied.
The resulting bound on the norm of $u$ then follows as in Case 1.
\end{proof}

We now apply the Lemma to \eqref{eq:dxF} where we see that
\begin{equation}
\alpha(x) = \frac{a}{2} \cos(\frac{u+u'}{2}) + \frac{1}{2a} \cos(\frac{u-u'}{2}) 
\end{equation}
Recalling that $u'\in H^1$ and that $u$ is an $H^1$ perturbation of the kink, we see that $u\pm u'$ will approach
their limits as $x\to \pm \infty$ in $L^2$ and hence that $ \cos(\frac{u\pm u'}{2})$ will approach their limits in $L^1$ as
required.  Furthermore, $\alpha_{\pm} = \mp (\frac{a}{2} + \frac{1}{2a})$, so we are in Case 1 of the Lemma.
Thus, \eqref{eq:dxF} can be solved for any $f \in L^2$, which gives the invertibility of $D_{u,v} F$ and verifies
$(H1)$.

Next consider hypothesis $(C1)$ of Theorem \ref{th:main}.  Note that from the calculation above, we see that 
$D_{(u,v)}F$ does have a one-dimensional kernel spanned by 
\[ \phi(x) = \mu(x) \mbox{ and } \psi(x) =
\left( (\frac{1}{2a}) \cos(\frac{u+u'}{2}) - \frac{a}{2} \cos(\frac{u-u'}{2}) \right) \mu(x) - \mu'(x).\]  Consider the
subspace orthogonal to this kernel.  Note that we need a uniform bound on the inverse of $D_{(u,v)}F$ only
for $(u',v')\in H = \{ (0,0) \}$ and for $a$ in some compact subset of $(0,1)$.  A bound on the inverse is
easily derived from the proof of the lemma and is proportional to the constant $C_{\alpha}$ which we again
see from the proof of the lemma is determined by $ \int_{x_0}^{\pm \infty} | \alpha(t) - \alpha_{\pm}| dt$.
Since $u'=0$, we know that $u(x)$ is given by a $1$-kink solution of the Sine-Gordon equation and 
hence $\alpha(x) =  \frac{a}{2} \cos(\frac{u}{2}) + \frac{1}{2a} \cos(\frac{u}{2}) $.  If we choose the point $x_0$ to be
the midpoint of the kink, then it is easy to show that the quantities $ \int_{x_0}^{\pm \infty} | \alpha(t) - \alpha_{\pm}| dt$,
and hence the constants $C_{\alpha}$, can be bounded uniformly for all kinks with parameter $a$ in some
compact subinterval of $(0,1)$.  To verify hypothesis $(C1)$ it remains only to check that the derivative $D_{(u,v)}F$ is Lipschitz, but this follows from the fact that $\cos$ regarded as a function $\R \to \R$ is Lipschitz together with the fact that the operator norm of a multiplication operator is bounded by the $L^\infty$ norm of the the function by which it mutiplies.

We now turn to $(H2)$ and $(C2)$.  In this case, we must solve the equations
\begin{equation}
D_{(u',v',a)} F(u,v,u',v',a) \left( \ba{c} \phi \\ \psi \\ \delta a \ea \right) = 
\left( \ba{c} f \\ g\ea \right)\ ,
\end{equation}
where
\begin{equation}
D_{(u',v',a)} F(u,v,u',v',a) = \left( \ba{ccc} -\frac{a}{2}\cos(\frac{u+u'}{2}) + \frac{1}{2a} \cos(\frac{u-u'}{2}) & 1 & -\sin(\frac{u+u'}{2}) + 
a^{-2}\sin(\frac{u-u'}{2}) \\ \\ \partial_x + \frac{1}{2a}\cos(\frac{u-u'}{2}) + \frac{a}{2}\cos(\frac{u+u'}{2}) & 0 & a^{-2} \sin(\frac{u-u'}{2}) + \sin(\frac{u+u'}{2}) \ea \right)
\end{equation}

We can solve the first equation to find $\psi$ in terms of $\phi$, $f$, $g$ and $a$.  Thus, we focus on the second
equation
\begin{equation}\label{eq:dyF}
\partial_x \phi +  \left( \frac{a}{2}\cos(\frac{u+u'}{2}) + \frac{1}{2a}\cos(\frac{u-u'}{2}) \right)  \phi = g - b(x) \delta a
\end{equation}
where $b(x) = a^{-2} \sin(\frac{u-u'}{2}) +\sin(\frac{u+u'}{2})$  This is remarkably similar to equation \eqref{eq:dxF}
except that the sign in front of the non-autonomous term $ \left( \frac{a}{2}\cos(\frac{u+u'}{2}) + \frac{1}{2a}\cos(\frac{u-u'}{2}) \right) $
has changed.  This means that we are in Case 2 of the Lemma, rather than Case 1, and in order to solve the equation we
must check that the right hand side of \eqref{eq:dyF} is orthogonal to the solution of the adjoint ODE.
In this case, one can check that the solution of the adjoint ODE is $\mu(x) = \exp( \int_{x_0}^x  \alpha(t) dt)$, where
in this case $\alpha(x) = \left( \frac{a}{2}\cos(\frac{u+u'}{2}) + \frac{1}{2a}\cos(\frac{u-u'}{2}) \right) $.  We can 
insure that the RHS of \eqref{eq:dyF} is orthogonal to $\mu$ by picking $\delta a$ appropriately,
provided $\int_{-\infty}^{\infty} b(x) \mu(x) dx \ne 0$.

So far, we have not found any way of demonstrating that this integral is non-zero for an arbitrary choice of $u \in X_1$
and $u' \in X_0$.  However, if we take $u'=0$ and $u$ equal to a $1$-kink, we have
$b(x) = 2(1+a^{-2}) \exp(ax)/(1+\exp(2ax)) > 0$.  Likewise, $\mu(x) > 0$ for all $x$, so $\int_{-\infty}^{\infty} b(x) \mu(x) dx \ne 0$.
Since both $b$ and $\mu$ depend smoothly on $u$ and $u'$, this condition will also hold for
all $u$ near the $1$-kink and all $u'$ near the zero-solution.  Thus, Theorem \ref{thm:one-kink}, holds on such
a neighborhood.  This verifies hypothesis $(H2)$.  To check $(C1)$ we need only derive a uniform estimate the solution
$\phi$ of \eqref{eq:dyF} for $u'=0$ and $a$ in some compact subinterval of $(0,1)$.  This follows in a very
similar fashion to estimates on solutions of \eqref{eq:inhom_soln}, and we leave the details as an easy exercise.

\end{proof}

\begin{remark}
We note that there is a natural path to attempt to build on the preceding result to establish the stability of an arbitrary
$k$-kink solution.  It is known that the B\"acklund transformation \eqref{eq:SG-BT} links the $k$-kink to the $k+1$-kink.
Thus, we can repeat the above proof, this time considering
\begin{equation}
F:X_2\times X_1 \times (0,1) \to Z 
\end{equation}
and considering the the base point of our theorem $(u',v')$ to be a $1$-kink.  Then the manifold $H$ is the family
of $1$-kinks.  If we then consider the linearizations $D_{(u,v)} F$ and $D_{(u',v',a)} F$, the verification of
$(H1)$, $(H2)$, $(C1)$ and $(C2)$ proceed much as above.  The only points that need to be checked are the
uniform estimates on the inverses.  These require uniform estimates on the analogues of \eqref{eq:dxF} and
\eqref{eq:dyF}. 
In our estimates of the stability of the $1$-kink, we had the freedom to choose the point $x_0$ to be the center of the kink.
This made it simple to establish uniform estimates.  For multi-kink solutions, there is no such distinguished point  and we
need an analysis of the form of the $2$-kink solution to show that for
large time, the solution of these equations can be treated essentially  by regarding the
$2$-kink as a sum of two $1$-kink solutions which were estimated above.  
This program is carried out in detail to establish the stability of the
multi-soliton solutions of the Toda lattice in \cite{benes:2011}.
Once the stability of the $2$-kink
solution is established, it can be used in conjunction with the B\"acklund transformation to establish the
stability of the $3$-kink, and so-on and so-forth. \end{remark}

\subsection{The Toda Lattice}
As a second application of Theorem $\ref{th:main}$ we study the orbital stability of the multi-soliton solutions in the Toda Lattice 
\be \dot{q}_j = p_j; \quad \dot{p}_j = e^{q_{j-1}-q_j} - e^{q_j-q_{j+1}} \label{eq:todaev} \ee
posed in the energy space $\ell^2 \times \ell^2$.  Here $q$ and $p$ can be regarded as the position and momentum, respectively, of the jth particle in an infinite chain where neighboring particles resist compression quite strongly but resist extension only weakly.

Because of the lattice discreteness, some of the quantities that are most easily used to obtain multi-soliton solutions as constrained minimizers of a Lyapunov function in the PDE case are no longer conserved and so the Lyapunov function approach is not easily extended.  For single solitons more detailed stability results have been established, specifically orbital stability with asymptotic phase, and moreover asymptotic stability in a weighted space \cite{mizumachi:2009}.  
The techniques used in \cite{mizumachi:2009} were a combination of the B\"acklund approach we take here  with the dispersive approach mentioned in the introduction .  Using a linearized version of the B\"acklund transform, asymptotic stability of multi-solitons has also been obtained, albeit in an exponentially weighted space \cite{benes:2011}.  

The B\"{a}cklund transform for the Toda lattice, written in our framework, is 
\be
F_j(q,p,q',p',\kappa) = \left( \ba{l} p_j + e^{-(q_j'-q_j-\kappa)} + e^{-(q_j-q'_{j-1}+\kappa)} - 2\cosh \kappa \\ p_j' + e^{-(q_j'-q_j-\kappa)} + e^{-(q_{j+1}-q_j' + \kappa)} - 2\cosh \kappa \ea \right) 
\label{eq:todaBT}
\ee
where $\kappa$ is a real parameter.  As with the sine-Gordon equation considered above, the zero and one-kink solutions (and more generally the $m$- and $m+1$-kink solutions) are related via $\eqref{eq:todaBT}$ with the parameter $\kappa$ controlling the amplitude $(2\kappa)$ and speed $\frac{\sinh \kappa}{\kappa}$ of the additional kink.   

Let $\Phi = \Psi$ be the propagator for the Toda lattice.  Denote the $m$-kink solution with amplitude parameter $\kappa_1,\cdots \kappa_m$ and phase parameters $\delta_1, \cdots \delta_m$ by $(q^{\kappa_1,\cdots \kappa_m,\delta_1,\cdots \delta_m},p^{\kappa_1,\cdots \kappa_m,\delta_1,\cdots \delta_m})$ or more concisely $(q^m,p^m)$ when the parameters are understood.  Given an $m$-kink solution $(q^m,p^m)$ and an $m+1$-kink solution $(q^{m+1},p^{m+1})$ related via B\"acklund transform with parameter $\kappa$: $F(q^{m+1},p^{m+1},q^m,p^m,\kappa) = 0$, define the affine spaces $X_m$ and $X_{m+1}$ to be the space of $\ell^2$ perturbations of $(q^m,p^m)$ and $(q^{m+1},p^{m+1})$ respectively.

\begin{theorem} \label{th:toda}
The Toda-m-soliton is orbitally stable in $\ell^2$ in the sense of Lyapunov: Let $M$ denote the $2m$-dimensional manifold of $m$-soliton states with phases free to vary in $\R$ and with amplitudes constrained to any compact set.  Let $(Q^m,P^m)$ denote a point on $M$.  For each $\eps > 0$ there is a $\delta > 0$ such that whenever $(q,p) \in \ell^2 \times \ell^2$ satisfies $\|(Q^m,P^m)-(q,p)\|_{\ell^2 \times \ell^2} < \delta$, then its forward evolution $\Phi(t)(q,p)$ under $\eqref{eq:todaev}$ satisfies $d(\Phi(t)(q,p),M) < \eps$.
\end{theorem}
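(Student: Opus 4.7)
The plan is to prove Theorem \ref{th:toda} by induction on $m$, at each stage applying the abstract Theorem \ref{th:main} to the B\"acklund transform \eqref{eq:todaBT}. The base case $m=0$ reduces to Lyapunov stability of the zero equilibrium of \eqref{eq:todaev} in $\ell^2\times\ell^2$, a standard consequence of the coercive Hamiltonian structure near the origin. For the inductive step, assume the $2m$-dimensional manifold $H$ of $m$-soliton states with amplitudes in a fixed compact subset of $(0,\infty)^m$ and phases in $\R^m$ is Lyapunov stable in $X_m$. I would then invoke Theorem \ref{th:main} with $X = X_{m+1}$, $Y = X_m$, $\Phi = \Psi$ the Toda flow, $\Lambda$ a compact subinterval of $(0,\infty)$ containing the desired additional-kink parameter $\bar{\kappa}$, and $F$ given by \eqref{eq:todaBT}. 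Provided the hypotheses (H0)--(H2) and (C1)--(C3) all hold, the abstract theorem produces a Lyapunov stable invariant manifold $M\subset X_{m+1}$; since repeated iteration of the B\"acklund transform generates all multi-soliton solutions \cite{benes:2011}, the characterization $M = (y^*)^{-1}(H)$ combined with the extra free parameter $\lambda=\kappa$ and the one-dimensional kernel direction identifies $M$ with a $(2m+2)$-dimensional submanifold of the $(m+1)$-soliton family, closing the induction.

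The analytic work lies entirely in verifying the hypotheses. (H0) is satisfied by construction: given any $\bar{y}=(q^m,p^m)\in H$ and any $\bar{\kappa}\in\Lambda$, the B\"acklund transform produces an $(m+1)$-soliton $\bar{x}=(q^{m+1},p^{m+1})$ with $F(\bar{x},\bar{y},\bar{\kappa})=0$. For (H1) and (H2), direct computation shows that $D_{(q,p)}F$ is block lower-triangular in the $(p,q)$ splitting, with nontrivial block a first-order linear difference operator in $j$ of the form
\[ (L\phi)_j = e^{-(q_j'-q_j-\kappa)}\phi_j - e^{-(q_{j+1}-q_j'+\kappa)}\phi_{j+1}, \]
while $D_{(q',p',\kappa)}F$ has an analogous structure with the opposite sign of asymptotic decay. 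Invertibility on $\ell^2$ then reduces to a discrete counterpart of Lemma \ref{lem1}: the coefficients converge summably as $j\to\pm\infty$ to limits determined by the background soliton profile, and the sign pattern dictates that $D_{(q,p)}F$ fits a Case 1 discrete dichotomy (freely invertible on the complement of a one-dimensional kernel corresponding to translating the new kink), while $D_{(q',p',\kappa)}F$ fits Case 2 (invertible modulo a scalar compatibility condition absorbed by the choice of $\delta\kappa$, nondegeneracy being verifiable by direct computation on the unperturbed $(m+1)$-soliton). A convenient complement $X_1$ to the kernel of $D_{(q,p)}F$ may be pinned down by orthogonality to the explicit exponentially decaying kernel vector.

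The conditions (C1)--(C3) demand that these estimates be uniform for $(q',p')$ within the $\delta_0$-neighborhood of $H$ and $\kappa\in\Lambda$, and this is where the principal obstacle lies. For $m=0$ the argument mimics the sine-Gordon proof of Theorem \ref{thm:one-kink}: one anchors the discrete origin $x_0$ at the center of the single new kink and exploits translation invariance together with the compact amplitude range. For $m\ge 1$, however, the underlying $m$-soliton background is itself a sum of well-separated kinks with \emph{no single distinguished center}, exactly the difficulty flagged in the remark after Theorem \ref{thm:one-kink}. I would handle this following \cite{benes:2011}: decompose $\Z$ into transition regions centered at each of the $m+1$ kinks and plateau regions where the background is exponentially close to a constant; on each region apply the discrete Lemma \ref{lem1} with $x_0$ chosen at the local kink center to get a constant depending only on the local amplitude; and patch the local solutions together using the exponential decay of the dichotomy Green's function to absorb the cross-region errors. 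The resulting uniform bounds depend only on the compact amplitude range (not the inter-kink separations), yielding (C1)--(C3) and thus completing the inductive step and the theorem.
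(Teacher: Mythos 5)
Your proposal is correct in outline and follows the same architecture as the paper's proof: induction on the number of solitons, Theorem \ref{th:main} applied at each stage to the B\"acklund relation \eqref{eq:todaBT}, reduction of $D_{(q,p)}F$ and $D_{(q',p',\kappa)}F$ to first-order linear difference equations, a discrete analogue of Lemma \ref{lem1} with the same Case 1 / Case 2 assignment, and reliance on \cite{benes:2011} for the hard multi-soliton estimates. The two arguments diverge in how the inductive step is organized and, consequently, in how the uniform bounds (C1)--(C3) are obtained. You take $H$ to be the full $2m$-dimensional manifold of $m$-solitons and therefore must prove estimates uniform over all phases and amplitudes, which you propose to do by a spatial decomposition of $\Z$ into kink-centered transition regions and plateaus. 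The paper instead takes $H$ at stage $k$ to be a single orbit (the temporal evolution of one fixed $(k-1)$-soliton) and $M$ a $2$-manifold, and obtains uniformity along that orbit from temporal asymptotics: the controlling quantity $\sum_n|\alpha_n-\alpha_\pm|$ has a limit in $\ell^1$ as $t\to\infty$ because the multi-soliton resolves into well-separated $1$-solitons (Lemma 2.1 of \cite{benes:2011} together with the exponential localization of \cite{PF1}), hence is bounded in time. Your version is more faithful to the statement of Theorem \ref{th:toda}, which asserts stability of the full $2m$-dimensional manifold, at the price of a harder uniformity claim; the paper's version is more economical but as written directly yields stability only of low-dimensional invariant submanifolds at each stage. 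One device worth importing from the paper: for the nondegeneracy in (H2) you say the pairing of the $\kappa$-column with the adjoint kernel is ``verifiable by direct computation on the unperturbed $(m+1)$-soliton,'' but for a genuine multi-soliton background this computation is not direct; the paper invokes the fact (from \cite{benes:2011}) that the pairing is independent of $t$, so it can be evaluated in the limit $t\to\infty$ where it collapses to the explicit vacuum--one-kink computation. You will need that, or an equivalent localization argument, to make the nondegeneracy check rigorous for $m\ge 1$.
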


\begin{proof}
The proof proceeds by induction.  At each stage of the induction we apply Theorem \ref{th:main}.  At the $k^{th}$ stage of the induction, the invariant manifold $H$ is a particular $1$-manifold corresponding to the temporal evolution of a particular $k-1$-soliton while $M$ is a particular $2$-manifold corresponding to the temporal evolution of a one-parameter family of $k$-solitons with the parameter corresponding to the initial phase of the additional soliton.  The inductive hypothesis is used not to verify the hypotheses (H0)-(H2) and (C0)-(C2) which are established by hand at each stage of the induction, but rather to verify that $H$ is Lyapunov-stable.  This is a natural use of the inductive hypothesis because the $H$ used for the $k^{th}$ stage of induction is a submanifold of the $M$ used for the $(k-1)^{st}$ stage of the induction and hence Lyapunov-stability is guaranteed by Theorem \ref{th:main}.

We first verify (H0).  In the base case we set $(q',p') = (0,0)$ and solve the equation $F(q,p,0,0,\kappa) = 0$ for $q_j = \log\frac{\cosh(\kappa j + \gamma)}{\cosh(\kappa(j+1) + \gamma)}$ and $p_j = e^{\kappa}\left(\frac{\cosh(\kappa j + \gamma)}{\cosh(\kappa j + \gamma + \kappa)} -1 \right) + e^{-\kappa}\left( \frac{\cosh(\kappa j + \gamma + \kappa)}{\cosh(\kappa j + \gamma)}-1\right)$.  More generally, we set $(q',p') = (q^m,p^m)$ and solve $F(q,p,q^m,p^m,\kappa) = 0$ for $(q,p) = (q^{m+1},p^{m+1})$.  To do this computation from scratch is a significant undertaking; we rely on the early literature in the history of the Toda lattice \cite{wadati:1975}.

Note that in this formulation the differences in the asymptotic values satisfy $q'_{-\infty} -q_{-\infty} = 0$ and $q'_\infty - q_\infty = -2\kappa$.  We now check the hypotheses (H1)-(H2).  

To check (H1) we differentiate $F$ and obtain
\[ D_{(p,q)} F(p,q,p',q',\kappa) = \left( \ba{ll} e^{-(q'-q-\kappa)}-e^{-(q-q_-'+\kappa)} & I  \\ \\ e^{-(q'-q-\kappa)}-e^{-(q_+-q'+\kappa)}S & 0 \ea \right). \] Here $S$ is the shift operator $(Sq)_j = q_{j+1}$ and the symbols $q_\pm$ denote the shifted sequences $S^{\pm 1}q$.  We must study 
\[ \left( \ba{ll} e^{-(q'-q-\kappa)}-e^{-(q-q_-'+\kappa)} & I  \\ \\ e^{-(q'-q-\kappa)}-e^{-(q_+-q'+\kappa)}S & 0 \ea \right) \left( \ba{c} \phi \\ \psi \ea \right) = \left( \ba{c} f \\ g \ea \right), \]
obtaining solvability conditions for $\phi$ and $\psi$ in terms of $f$ and $g$.

Note that $\phi$ is given as a linear combination of $f$ and the action of a multiplication operator (bounded from $\ell^2 \to \ell^2$) on $\psi$.  Thus we restrict attention to the second row, which is a first order linear difference equation
\[ \psi_+ = e^{-(2q'-q-q_+-2\kappa)}\psi + e^{(q_+-q'+\kappa)}g \]
This equation is of the form $\psi_{j+1} = \alpha_j\psi_j + f_j$ which can be solved explicitly via a summing factor.  A discrete analog of Lemma $\ref{lem1}$ holds with the relevant numbers now $|\alpha_{\pm \infty}|$ rather than $\mathrm{sgn}(\alpha_{\pm \infty})$:

\begin{lemma}
Consider the first order linear recursion $u_{n+1} -\alpha_n u_n = f_n$ and suppose that the limits $\alpha_\pm = \lim_{n \to \pm \infty} \alpha_n$ exist.
\begin{enumerate}
\item Suppose that $|\alpha_{-\infty}| > 1 > |\alpha_\infty|$ and that $\sum_{n=1}^\infty |\alpha_n - \alpha_+| < \infty$ as well as $\sum_{n = -\infty}^0 |\alpha_n - \alpha_-| < \infty$.  Then there is a constant $C_\alpha$ depending on $\sum_{n=0}^{\pm \infty} |\alpha_n - \alpha_{\pm}|$ such that the unique solution $u$ with $u_0 = 0$ satisfies $\|u\|_{\ell^2} \le C_\alpha \|f\|_{\ell^2}$.
\item Suppose that $|\alpha_{-\infty} < 1 < |\alpha_\infty|$ and that $\sum_{n=1}^\infty |\alpha_n - \alpha_+| < \infty$ as well as $\sum_{n = -\infty}^0 |\alpha_n - \alpha_-| < \infty$.  Then there is a constant $C_\alpha$ depending on $\sum_{n=0}^{\pm \infty} |\alpha_n - \alpha_{\pm}|$ such that if  $f$ satisfies $\sum_{n \in \Z} f_n \phi_n = 0$ for $\phi$ the unique (up to scalar multiple) solution of the adjoint equation $\phi_{n-1} - \alpha_n\phi_n = 0$ then there is a unique choice of $u_0$ such that $u \in \ell^2$ and for this choice $\|u\|_{\ell^2} \le C_\alpha \|f\|_{\ell^2}$
\end{enumerate}
\end{lemma}

Its proof is similar to the proof of Lemma $\ref{lem1}$ and can be regarded as a consequence of the theory of exponential dichotomies or as an exercise for the reader.  

We continue now with the proof of Theorem \ref{th:toda}, computing $\alpha_{\pm} = e^{-2(q'_{\pm \infty} - q_{\pm \infty} - \kappa)} = e^{\mp 2 \kappa}$.  To check the hypotheses of the lemma we must show that $\alpha - \alpha_{\pm}$ is summable over $\pm \mathbb{N}$.  To that end, we compute $a - a_+ = e^{-2\kappa}\left(e^{-(2q'-q'q_+ - 4\kappa)} - 1\right) \approx q' - (q + 2\kappa) + q' - (q_+ + 2\kappa)$.  It follows from Lemma 2.1 in \cite{benes:2011} together with the fact that $q_+-q$ is exponentially localized \cite{PF1}, that this quantity is in $\ell^1$ and moreover approaches $2\kappa$ in $\ell^1$ exponentially fast as $t \to \infty$ when $q$ and $q'$ are $m$ and $m-1$ soliton solutions respectively.  We remark that this is one place where the restriction to a compact set of $\kappa$ is necessary.  These estimates are not uniform in the limit as any of the wave speeds goes to zero. 

The coefficients $\alpha_{\pm}$ satisfy $\alpha_{-} > 1 > \alpha_+$ and thus we are in case (1) of the lemma.  This establishes (H1).  In the base case this establishes the uniform bound in (C1) as well; after all uniform bounded are not hard to obtain on $H \times M \cong \{(0,0)\} \times \R / \Z$.  The constant $C_\alpha$ given in the lemma depends only upon $\sum_{n > 0} \alpha_n - \alpha_+$ which has a limit in $\ell^1$ as $t \to \infty$ and hence is bounded uniformly as $t \to \infty$.  This establishes that the constant $C_\alpha$ is bounded along any trajectory in $H \times M \cap F^{-1}(0)$ that corresponds to the temporal evolution of the $k$-soliton state under $\eqref{eq:todaev}$, i.e. it establishes the uniform bound in (C1) when the manifold $H$ in the theorem is the orbit of a $k$-soliton state.  To verify (C1) it only remains to check that the derivative of $F$ is Lipschitz, but this is immediate.

We now check (H2)-(C2).  We compute
\[ D_{q',p',\kappa}F(q,p,q',p',\kappa) = \left( \ba{ccc} e^{-(q-q'_-+\kappa)}S^{-1} - e^{-(q'-q-\kappa)} & 0 & e^{-(q'-q-\kappa)}-e^{-(q-q_-'+\kappa)} -2\sinh \kappa \\ \\ e^{-(q_+-q+\kappa)} - e^{-(q'-q-\kappa)} & I & e^{-(q'-q-\kappa)} - e^{-(q_+-q'+\kappa)}-2\sinh\kappa \ea \right) \]
and solve
\[  \left( \ba{ccc} e^{-(q-q'_-+\kappa)}S^{-1} - e^{-(q'-q-\kappa)} & 0 & e^{-(q'-q-\kappa)}-e^{-(q-q_-'+\kappa)} -2\sinh \kappa \\ \\ e^{-(q_+-q+\kappa)} - e^{-(q'-q-\kappa)} & I & e^{-(q'-q-\kappa)} - e^{-(q_+-q'+\kappa)}-2\sinh\kappa \ea \right) \left(\ba{c} \phi \\ \psi \\ \delta \kappa \ea \right) = \left( \ba{c} f \\ g \ea \right) \]

The second row gives $\psi$ as a linear combination of $g$, $\delta \kappa$ and a bounded multiplication operator acting on $\phi$ thus we restrict attention to the first row which is a first order, non-autonomous linear recurrence for $\phi$.  The coefficient $\alpha = e^{(2q - q'-q'_- + 2\kappa)}$ is well behaved just like the similar coefficient we studied when verifying (H1)-(C1).  In particular, the hypotheses of Lemma 2 are satisfied and we are in case 2.  To verify (H2) we must show that the $(2,3)$ entry of the derivative matrix is not orthogonal to the kernel of the adjoint.  At first glance it appears that one must dirty one's hands with the explicit form of the $m$-soliton solution in order to do this computation.  However, it was shown in \cite{benes:2011} that the quantity of interest is independent of time and hence it suffices to analyze the quantity in the limit $t \to \infty$ where it reduces to the computation for the vaccuum-kink pairing.  This computation is not difficult and has been checked in \cite{benes:2011}.  To check (C2) we again make use of the fact that a multi-soliton decomposes into the linear superposition of soliton solutions in $\ell^1$.
\end{proof}

\section*{Acknowledgements} This work was supported in part by NSF grants DMS-0908093 (CEW) and DMS-108788 (AH).  The first author thanks M. Noonan for giving him a primer on geometric B\"acklund transforms.  In addition both authors acknowledge useful conversations with G.N. Benes and D. Pelinovsky about the use of B\"acklund transformations in stability calculations.



\end{document}